\documentclass[12pt]{article}
\usepackage{pifont}

\usepackage{amsfonts}
\usepackage{latexsym}
\usepackage{amsmath}
\usepackage{amssymb}
\usepackage{color}

 \setlength{\parskip}{3pt plus1pt minus2pt}
 \setlength{\baselineskip}{20pt plus2pt minus1pt}
 \setlength{\textheight}{21true cm}
 \setlength{\textwidth}{14.5true cm}

\newtheorem{theorem}{Theorem}[section]

\newtheorem{proposition}{Proposition}[section]

\newtheorem{example}{Example}[section]

\newenvironment{proof}[1][Proof]{\noindent \textbf{#1.} }{\ \ \  $\Box$}

\newtheorem{lemma}{Lemma}[section]

\newtheorem{remark}{Remark}[section]

\title{Necessary and sufficient condition for the comparison theorem of multidimensional anticipated backward stochastic differential equations}

\date{}

 \author{ Xiaoming Xu\thanks{E-mail: xmxu@mail.sdu.edu.cn}
 \\ \small{School of Mathematics, Shandong University, Jinan, 250100, China}
 \\ \small{School of Mathematical Sciences, Nanjing Normal University, Nanjing, 210046, China}
 }

\begin{document}

\maketitle

\begin{abstract}
Anticipated backward stochastic differential equations, studied the
first time in 2007, are equations of the following type:
\begin{equation*} \left\{
\begin{tabular}{rlll}
$-dY_t$ &=& $f(t, Y_t, Z_t, Y_{t+\delta(t)},
Z_{t+\zeta(t)})dt-Z_tdB_t, $ & $
t\in[0, T];$\\
$Y_t$ &=& $\xi_t, $ & $t\in[T, T+K];$\\
$Z_t$ &=& $\eta_t, $ & $t\in[T, T+K].$
\end{tabular}\right.
\end{equation*} In this paper, we give a necessary and sufficient
condition under which the comparison theorem holds for
multidimensional anticipated backward stochastic differential
equations with generators independent of the anticipated term of
$Z$.
\\
\par $\textit{Keywords:}$ comparison theorem, multidimensional anticipated backward stochastic
differential equation, necessary and sufficient condition
\end{abstract}



\section{Introduction}

Backward stochastic differential equation (BSDE) of the general form
\begin{equation}\label{equation:BSDE}
Y_t=\xi+\int_t^T g(s, Y_s, Z_s)ds-\int_t^T Z_sdB_s
\end{equation}
was considered the first time by Pardoux-Peng \cite{PP1}. Since
then, the theory of BSDEs has been studied with great interest. One
of the achievements of this theory is the comparison theorem. It is
due to Peng \cite{P} and then generalized by Pardoux-Peng \cite{PP2}
and El Karoui-Peng-Quenez \cite{KPQ}. It allows to compare the
solutions of two BSDEs whenever we can compare the terminal
conditions and the generators. The converse comparison theorem for
BSDEs has also been studied (see [1, 3, 6]). Besides, a necessary
and sufficient condition for the comparison theorem in the
multidimensional case was given by Hu-Peng \cite{HP}, and their main
method consists in translating the comparison principle into an
equivalent viability property for BSDEs, studied by
Buckdahn-Quincampoix-R\v{a}\c{s}canu \cite{BQR}.

Recently, a new type of BSDE, called anticipated BSDE (ABSDE), was
introduced by Peng-Yang \cite{PY} (see also Yang \cite{Y}). The
ABSDE is of the following form:
\begin{equation}\label{equation:PY}
\left\{
\begin{tabular}{lll}
$-dY_t=$&\hspace{-4.5mm} $f(t, Y_t, Z_t, Y_{t+\delta(t)},
Z_{t+\zeta(t)})dt-Z_tdB_t, $ & $
t\in[0, T];$\\
$Y_t=$ &\hspace{-8.5mm} $\xi_t, $ & $t\in[T, T+K];$\\
$Z_t=$ &\hspace{-8.5mm} $\eta_t, $ & $t\in[T, T+K],$
\end{tabular}\right.
\end{equation}
where $\delta(\cdot): [0, T]\rightarrow \mathbb{R^+} \setminus
\{0\}$ and $\zeta(\cdot): [0, T]\rightarrow \mathbb{R^+} \setminus
\{0\}$ are continuous functions satisfying

$\mathbf{(a1)}$ there exists a constant $K \geq 0$ such that for
each $t\in[0, T],$
$$t+\delta(t) \leq T+K,\quad t+\zeta(t) \leq T+K;$$

$\mathbf{(a2)}$ there exists a constant $M \geq 0$ such that for
each $t\in[0, T]$ and each nonnegative integrable function
$g(\cdot)$, $$\int_t^T g(s+\delta(s))ds\leq M\int_t^{T+K}
g(s)ds,\quad \int_t^T g(s+\zeta(s))ds\leq M\int_t^{T+K} g(s)ds.$$

\cite{PY} tells us that (\ref{equation:PY}) has a unique solution
under proper assumptions. Furthermore, for $1$-dimensional ABSDEs
there is a comparison theorem, which requires that the generators of
the ABSDEs cannot depend on the anticipated term of $Z$ and one of
them must be increasing in the anticipated term of $Y$.

The aim of this paper is to give a comparison theorem for
multidimensional ABSDEs with generators independent of the
anticipated term of $Z$ and possibly not increasing in the
anticipated term of $Y$. Moreover, the condition under which the
comparison theorem holds is necessary and sufficient. The main
approach we adopt is to consider an ABSDE as a series of BSDEs and
then apply the results in \cite{HP}. It should be mentioned here
that the reason why the generators are still required to be
independent of the anticipated term of $Z$ is that the continuity
property of $f(\cdot, y, z, Y_{\cdot+\delta(\cdot)},
Z_{\cdot+\zeta(\cdot)})$, where $(Y, Z)$ is the unique solution to a
BSDE, is hard to depict.

The paper is organized as follows: in Section $2$, we list some
notations and some existing results which will be used in the text.
In Section $3$, we mainly study the comparison theorem for
multidimensional ABSDEs, besides, we also discuss a lot about that
for $1$-dimensional ABSDEs.

\section{Preliminaries}

Let $\{B_t; t\geq 0\}$ be a $d$-dimensional standard Brownian motion
on a probability space $(\Omega, \mathcal{F}, P)$ and
$\{\mathcal{F}_t; t\geq 0\}$ be its natural filtration. Denote by
$|\cdot|$ the norm in $\mathbb{R}^m.$ Given $T
>0,$ we will use the following notations:

$\bullet$ $L^2(\mathcal{F}_T; \mathbb{R}^m)$ := $\{\xi\in
\mathbb{R}^m$ $|$ $\xi$ is an $\mathcal{F}_T$-measurable random
variable such that $E|\xi|^2< + \infty\};$

$\bullet$ $L_{\mathcal{F}}^2(0, T; \mathbb{R}^m)$ := $\{ \varphi:
\Omega\times [0, T]\rightarrow \mathbb{R}^m$ $|$ $\varphi$ is
progressively measurable and $E\int_0^T |\varphi_t|^2dt< +
\infty\};$

$\bullet$ $S_{\mathcal{F}}^2(0, T; \mathbb{R}^m)$ := $\{\psi:
\Omega\times [0, T]\rightarrow \mathbb{R}^m$ $|$ $\psi$ is
progressively measurable and $E[\sup_{0 \leq t \leq T} |\psi_t|^2]<
+ \infty\}.$

\subsection{Comparison theorem for multidimensional BSDEs}

Consider the BSDE (\ref{equation:BSDE}). For the generator $g:
\Omega\times [0, T]\times \mathbb{R}^m\times \mathbb{R}^{m\times
d}\rightarrow \mathbb{R}^m$, we make the following assumptions,
which are essential for \cite{BQR} as well as \cite{HP}:

${\bf{(A1)}}$ $g(\cdot, \cdot, y, z)$ is progressively measurable,
and for each $(y, z),$ $g(\omega, \cdot, y, z)$ is continuous, a.s.;

${\bf{(A2)}}$ there exists a constant $L_g \geq 0$ such that for
each $s\in [0, T],$ $y, y^\prime \in \mathbb{R}^m,$ $z, z^\prime \in
\mathbb{R}^{m\times d},$ the following holds:
$$|g(s, y, z)-g(s, y^\prime, z^\prime)|\leq L_g(|y- y^\prime|+|z-z^\prime|);$$

${\bf{(A3)}}$ $\sup_{0\leq s\leq T} |g(s, 0, 0)|\in
L^2(\mathcal{F}_T; \mathbb{R}).$

Then according to \cite{PP1}, for each $\xi\in L^2(\mathcal{F}_T;
\mathbb{R}^m),$ BSDE (\ref{equation:BSDE}) has a unique solution.

We recall here the comparison theorem for multidimensional BSDEs
from \cite{HP}. For $j=1, 2$, let $(Y^{(j)}, Z^{(j)})$ be the unique
solution to the following BSDE:
\begin{equation}\label{equation:hu peng comparison}
Y_t^{j}=\xi^j + \int_t^T g_j(s, Y_s^{j}, Z_s^{j})ds- \int_t^T
Z_s^{j}dB_s,
\end{equation}
where $\xi^j\in L^2(\mathcal{F}_T; \mathbb{R}^m)$ and $g_j$
satisfies (A1)--(A3).

\begin{theorem}
{\it The following are equivalent:

{\rm (i)} for all $\tau\in [0, T],$  $\xi^j\in L^2(\mathcal{F}_\tau;
\mathbb{R}^m)$ $(j=1, 2)$ such that $\xi^1 \geq \xi^2$, the unique
solutions $(Y^{j}, Z^{j})\in S_\mathcal{F}^2(0, \tau;
\mathbb{R}^m)\times L_\mathcal{F}^2(0, \tau; \mathbb{R}^{m\times
d})$ $(j=1, 2)$ to the BSDE {\rm(\ref{equation:hu peng comparison})}
over time interval $[0, \tau]\!:$
\begin{equation*}
Y_t^{j}=\xi^{j}+\int_t^\tau g_j(s, Y_s^{j}, Z_s^{j})ds-\int_t^\tau
Z_s^{j}dB_s,
\end{equation*}
satisfy
\begin{equation*}
Y_t^{1}\geq Y_t^{2}, {\rm{\ for\ all}}\ t\in[0, \tau],\ {\rm{a.s.}};
\end{equation*}

{\rm (ii)} for all $t\in [0, T]$, $(y, z), (y^\prime, z^\prime)\in
\mathbb{R}^m\times \mathbb{R}^{m\times d},$
\begin{equation}\label{equation:hu peng bsvp}
-4\langle y^-, g_1(t, y^++y^\prime, z)-g_2(t, y^\prime,
z^\prime)\rangle\leq 2\sum_{k=1}^m
\mathbf{1}_{\{y_k<0\}}|z_k-z_k^\prime|^2+C|y^-|^2,\ {\rm{a.s.}},
\end{equation}
where $C>0$ is a constant.}
\end{theorem}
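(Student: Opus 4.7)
The plan is to follow the strategy advertised in the introduction: translate the comparison property into a viability property for a suitable BSDE and then invoke the viability characterisation of Buckdahn--Quincampoix--R\v{a}\c{s}canu \cite{BQR}, which is itself an equivalence. Setting $\tilde Y := Y^{1}-Y^{2}$ and $\tilde Z := Z^{1}-Z^{2}$, the pair $(\tilde Y,\tilde Z)$ solves a BSDE on $[0,\tau]$ with terminal value $\tilde\xi := \xi^{1}-\xi^{2}$ and (random) generator
\begin{equation*}
\tilde g(t,\tilde y,\tilde z) := g_{1}\!\bigl(t,\tilde y + Y_{t}^{2},\tilde z + Z_{t}^{2}\bigr) - g_{2}\!\bigl(t,Y_{t}^{2},Z_{t}^{2}\bigr).
\end{equation*}
Statement (i) is equivalent to the assertion that whenever $\tilde\xi \in K := \mathbb{R}_{+}^{m}$ a.s., the solution $\tilde Y$ stays in $K$ on $[0,\tau]$, a.s., for every admissible choice of the reference process $(Y^{2},Z^{2})$.

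For the direction (ii) $\Rightarrow$ (i), I would apply the BQR sufficient condition for $K$-viability. For $K=\mathbb{R}_{+}^{m}$ one has $\pi_{K}(\tilde y)=\tilde y^{+}$, $\tilde y-\pi_{K}(\tilde y)=-\tilde y^{-}$, $d^{2}(\tilde y,K)=|\tilde y^{-}|^{2}$, while the row-wise product structure of the tangent cone to $K$ at $\tilde y^{+}$ collapses BQR's quadratic correction to $\sum_{k}\mathbf{1}_{\{\tilde y_{k}<0\}}|\tilde z_{k}|^{2}$. After the reparametrisation $y:=\tilde y$, $y':=Y_{t}^{2}$, $z:=\tilde z+Z_{t}^{2}$, $z':=Z_{t}^{2}$, the first argument of $g_{1}$ becomes $\tilde y^{+}+Y_{t}^{2}=y^{+}+y'$, the second becomes $z$, and the BQR inequality reproduces (\ref{equation:hu peng bsvp}). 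Thus pointwise (ii) verifies BQR's sufficient condition along every realisation of $(Y^{2},Z^{2})$, and (i) follows.

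For the converse (i) $\Rightarrow$ (ii), I would combine BQR necessity with an exhaustion argument. Assumption (i) yields $K$-viability of the difference BSDE, so the necessity half of BQR returns a pointwise inequality in $(\tilde y,\tilde z)$ holding at a.e.\ $(t,\omega)$. To upgrade it to every deterministic $(y',z')$, I would exploit the freedom in choosing $\xi^{2}$: linear terminal data such as $\xi^{2}=y'+z'(B_{t_{0}+h}-B_{t_{0}})$ drive $(Y_{t_{0}}^{2},Z_{t_{0}}^{2})$ to the prescribed value $(y',z')$ in the limit $h\to 0$ by continuity of the BSDE flow, forcing the inequality at every $(y',z')$; a density argument in $(y,z)$ then extends (ii) to all parameters and a.e.\ $t$.

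The main obstacle is pinning down the precise matching between BQR's abstract viability condition and the component-wise form (\ref{equation:hu peng bsvp}): the multidimensional Meyer--It\^o expansion of $|(\cdot)^{-}|^{2}$ produces the term $\sum_{k}\mathbf{1}_{\{\tilde y_{k}<0\}}|\tilde z_{k}|^{2}$ with a local-time contribution on $\{\tilde y_{k}=0\}$ that must be shown to vanish, and this component-wise structure has to coincide with the tangent-cone projection of $\mathbb{R}_{+}^{m}$ at the boundary. A secondary delicate point, arising in the necessity direction, is the realisation of arbitrary deterministic $(y',z')$ as values of $(Y^{2},Z^{2})$ at a fixed time, which requires a careful limit argument with the stability estimates for the BSDE flow.
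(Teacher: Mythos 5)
This statement is Theorem 2.1 of the paper, and the paper does not prove it: it is recalled verbatim from Hu--Peng \cite{HP}, the paper's only comment on its proof being the remark in the introduction that the method of \cite{HP} is to translate the comparison principle into a viability property and invoke Buckdahn--Quincampoix--R\v{a}\c{s}canu \cite{BQR}. Your sketch follows exactly that advertised strategy, so it should be measured against the cited reference rather than against anything in this paper.

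Against that benchmark, your reduction differs from \cite{HP} in a way that opens a real gap. You apply the viability characterisation to the difference $\tilde Y=Y^1-Y^2$ in the cone $\mathbb{R}^m_+$, which makes the generator $\tilde g(t,\tilde y,\tilde z)=g_1(t,\tilde y+Y^2_t,\tilde z+Z^2_t)-g_2(t,Y^2_t,Z^2_t)$ depend on the reference solution $(Y^2,Z^2)$, hence on the terminal datum $\xi^2$ that statement (i) quantifies over. The theorem of \cite{BQR} is an equivalence for one BSDE with one fixed generator satisfying (A1)--(A3); it does not, as stated, characterise viability simultaneously for a whole family of random-coefficient BSDEs indexed by $\xi^2$. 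Consequently both halves of your argument need to be rebuilt: the sufficiency half requires the BQR criterion to be verified along every realisation of $(Y^2_t,Z^2_t)$ (which is why (ii) must hold for all $(y',z')$), and the necessity half hinges on your exhaustion argument realising arbitrary deterministic $(y',z')$ as $(Y^2_{t_0},Z^2_{t_0})$, which you only gesture at and which is the genuinely delicate step. Hu--Peng sidestep all of this by applying \cite{BQR} once, to the coupled system $(Y^1,Y^2)$ in $\mathbb{R}^{2m}$ with the closed convex cone $K=\{(a,b):a\ge b\}$; its squared distance $\tfrac12|(a-b)^-|^2$ is what produces the factors $-4\langle y^-,\cdot\rangle$ and $2\sum_k\mathbf{1}_{\{y_k<0\}}|z_k-z_k'|^2$ in the stated inequality, and that formulation stays entirely within the hypotheses of \cite{BQR}. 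Your computation of the first and second derivatives of $d_K^2$ for $\mathbb{R}^m_+$ is correct, but the local-time and tangent-cone issues you flag at the end are precisely the points that \cite{BQR} settles and that you would otherwise have to reprove in the random-generator setting; so either switch to the $\mathbb{R}^{2m}$ formulation or supply a random-coefficient version of the BQR theorem before the proof can be considered complete.
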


\begin{remark} In fact, the constant $C$ in (\ref{equation:hu
peng bsvp}) only depends on the Lipschitz coefficients $L_{g_j}$
$(j=1, 2)$ of the generators $g_j$ $(j=1, 2)$, which can be easily
got from the detailed proofs in \cite{BQR} and \cite{HP}.
\end{remark}

\begin{remark} Let $m=1$. Then (\ref{equation:hu peng bsvp}) is
equivalent to
\begin{equation*}
g_1(t, y, z)\geq g_2(t, y, z).
\end{equation*}
This has been stated already in \cite{HP}.
\end{remark}

\subsection{Multidimensional anticipated BSDEs}

Now let us consider the ABSDE (\ref{equation:PY}). First for the
generator $f(\omega, s, y, z, \theta, \phi): \Omega \times [0,
T]\times \mathbb{R}^m\times \mathbb{R}^{m\times d}\times
S_\mathcal{F}^2(s, T+K; \mathbb{R}^m)\times L_\mathcal{F}^2(s, T+K;
\mathbb{R}^{m\times d})\rightarrow L^2 (\mathcal{F}_s;
\mathbb{R}^m),$ we introduce two hypotheses:

${\bf{(H1)}}$ there exists a constant $L_f > 0$ such that for each
$s\in [0, T],$ $y, y^\prime\in \mathbb{R}^m,$ $z, z^\prime \in
\mathbb{R}^{m\times d},$ $\theta, \theta^\prime \in
L_\mathcal{F}^2(s, T+K; \mathbb{R}^m),$ $\phi, \phi^\prime \in
L_\mathcal{F}^2(s, T+K; \mathbb{R}^{m\times d}),$ $r, \bar{r}\in [s,
T+K],$ the following holds:
\begin{equation*}
|f(s, y, z, \theta_r, \phi_{\bar{r}})-f(s, y^\prime, z^\prime,
\theta_r^\prime, \phi_{\bar{r}}^\prime)|\leq L_f(|y-
y^\prime|+|z-z^\prime|+E^{\mathcal{F}_s}[|\theta_r-\theta_r^\prime|+|\phi_{\bar{r}}-\phi_{\bar{r}}^\prime|]);
\end{equation*}

${\bf{(H2)}}$ $E[\int_0^T |f(s, 0, 0, 0, 0)|^2ds]< +\infty.$

Let us review the existence and uniqueness theorem for ABSDEs from
\cite{PY}:

\begin{theorem}
{\it Assume that $f$ satisfies {\rm(H1)} and
{\rm(H2)}, $\delta,$ $\zeta$ satisfy {\rm(a1)} and {\rm(a2)}, then
for arbitrary given terminal conditions $(\xi, \eta) \in
S_\mathcal{F}^2(T, T+K; \mathbb{R}^m)\times L_\mathcal{F}^2(T, T+K;
\mathbb{R}^{m\times d}),$ the ABSDE {\rm(\ref{equation:PY})} has a
unique solution, i.e., there exists a unique pair of processes $(Y,
Z)\in S_\mathcal{F}^2(0, T+K; \mathbb{R}^m)\times L_\mathcal{F}^2(0,
T+K; \mathbb{R}^{m\times d})$ satisfying {\rm(\ref{equation:PY})}.
}
\end{theorem}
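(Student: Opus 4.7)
The plan is to prove existence and uniqueness by a Banach fixed-point argument in a weighted $L^2$-space on $[0,T+K]$, following the Pardoux--Peng blueprint and handling the anticipation through hypothesis (a2). First I would introduce the closed subset $\mathcal{B}$ of $S_\mathcal{F}^2(0, T+K; \mathbb{R}^m)\times L_\mathcal{F}^2(0, T+K; \mathbb{R}^{m\times d})$ consisting of pairs $(U,V)$ with $U_t=\xi_t,\ V_t=\eta_t$ on $[T,T+K]$, equipped with the norm
\begin{equation*}
\|(U,V)\|_\beta^2 = E\int_0^{T+K} e^{\beta s}\bigl(|U_s|^2+|V_s|^2\bigr)\,ds,
\end{equation*}
where $\beta>0$ will be fixed later. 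This is a Banach space.

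Next I would define the solution map $\Phi:\mathcal{B}\to\mathcal{B}$ by freezing the anticipated arguments. Given $(U,V)\in\mathcal{B}$, set $\tilde g(s,y,z):=f(s,y,z,U_{s+\delta(s)},V_{s+\zeta(s)})$ on $[0,T]$. By (H1) and (H2) this driver satisfies (A1)--(A3), so by Pardoux--Peng there is a unique $(Y,Z)\in S_\mathcal{F}^2(0,T;\mathbb{R}^m)\times L_\mathcal{F}^2(0,T;\mathbb{R}^{m\times d})$ solving
\begin{equation*}
Y_t=\xi_T+\int_t^T \tilde g(s,Y_s,Z_s)\,ds-\int_t^T Z_s\,dB_s,\qquad t\in[0,T].
\end{equation*}
Extending by $Y_t=\xi_t$, $Z_t=\eta_t$ on $[T,T+K]$ gives $\Phi(U,V):=(Y,Z)\in\mathcal{B}$. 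A fixed point of $\Phi$ is manifestly a solution of (\ref{equation:PY}), and vice versa.

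The heart of the proof is showing that $\Phi$ is a contraction for $\beta$ large enough. Take $(U^i,V^i)\in\mathcal{B}$, set $(Y^i,Z^i)=\Phi(U^i,V^i)$, and write $\Delta U,\Delta V,\Delta Y,\Delta Z$ for the differences. Apply It\^o's formula to $e^{\beta s}|\Delta Y_s|^2$ on $[0,T]$, use (H1), conditional Jensen's inequality, and Young's inequality; this yields an estimate of the form
\begin{equation*}
E\int_0^T e^{\beta s}\bigl(|\Delta Y_s|^2+|\Delta Z_s|^2\bigr)\,ds \leq \frac{C_{L_f}}{\beta}\, E\int_0^T e^{\beta s}\bigl(|\Delta U_{s+\delta(s)}|^2+|\Delta V_{s+\zeta(s)}|^2\bigr)\,ds.
\end{equation*}
Now apply (a2) to the nonnegative integrands $g(r)=e^{\beta r}|\Delta U_r|^2$ and $e^{\beta r}|\Delta V_r|^2$ (using $e^{\beta s}\leq e^{\beta(s+\delta(s))}$ and similarly for $\zeta$) to bound the right-hand side by $(2MC_{L_f}/\beta)\,\|(\Delta U,\Delta V)\|_\beta^2$. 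Since $\Delta Y,\Delta Z$ vanish on $[T,T+K]$, this is exactly $\|\Phi(U^1,V^1)-\Phi(U^2,V^2)\|_\beta^2$, so choosing $\beta>2MC_{L_f}$ makes $\Phi$ a strict contraction. The Banach fixed-point theorem then delivers a unique $(Y,Z)\in\mathcal{B}$ solving (\ref{equation:PY}), and the $S^2$-bound on $Y$ is obtained afterwards from the standard BDG estimate applied to the associated BSDE.

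The main obstacle is the passage from the anticipated integrals $\int_0^T e^{\beta s}|\Delta U_{s+\delta(s)}|^2\,ds$ back to the time-shifted norm $\int_0^{T+K}e^{\beta r}|\Delta U_r|^2\,dr$; without (a2) there is no reason for such a comparison to hold, and it is precisely this hypothesis that allows the Picard iteration to close. Every other step is a direct adaptation of the classical BSDE theory.
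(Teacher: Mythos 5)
The paper does not prove this statement: Theorem 2.2 is recalled from Peng--Yang \cite{PY} without proof, and your Banach fixed-point argument in the weighted $L^2$-norm, with (a2) used to pull the anticipated integrals $\int_0^T e^{\beta s}|\Delta U_{s+\delta(s)}|^2\,ds$ back to $\int_0^{T+K}e^{\beta r}|\Delta U_r|^2\,dr$, is exactly the proof given there and is correct. One small inaccuracy: the frozen driver $\tilde g(s,y,z)=f(s,y,z,U_{s+\delta(s)},V_{s+\zeta(s)})$ need not satisfy (A1)--(A3) as stated (continuity in $s$ is precisely the property the introduction calls ``hard to depict'', and (A3) is a sup-condition stronger than needed); however, Pardoux--Peng existence and uniqueness only requires progressive measurability, the Lipschitz condition and $E\int_0^T|\tilde g(s,0,0)|^2\,ds<+\infty$, all of which follow from (H1), (H2), (a2) and conditional Jensen, so the step stands as long as you invoke the weaker hypotheses rather than (A1)--(A3).
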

\vspace{1mm}

Next we will recall the comparison theorem from Peng-Yang \cite{PY}.
For $j=1, 2$, let $(Y^{(j)}, Z^{(j)})$ be the unique solution to the
following $1$-dimensional ABSDE:
\begin{equation}\label{equation:peng yang comparison}
\left\{
\begin{tabular}{llll}
$-dY_t^{(j)}$ &\hspace{-4mm}=&\hspace{-4mm} $f_j(t, Y_t^{(j)},
Z_t^{(j)}, Y_{t+\delta(t)}^{(j)})dt-Z_t^{(j)}dB_t, $ & $
t\in[0, T];$\\
$Y_t^{(j)}$ &\hspace{-4mm}=&\hspace{-4mm} $\xi_t^{(j)}, $ & $t\in[T,
T+K].$
\end{tabular}\right.
\end{equation}

\begin{theorem}
{\it Assume that $f_1,$ $f_2$ satisfy {\rm(H1)}
and {\rm(H2)}, $\xi^{(1)}, \xi^{(2)} \in S_\mathcal{F}^2(T, T+K;
\mathbb{R}),$ $\delta$ satisfies {\rm(a1)}, {\rm(a2)}, and for each
$t\in [0, T],$ $y\in \mathbb{R},$ $z\in \mathbb{R}^d,$ $f_2(t, y, z,
\cdot)$ is increasing, i.e., $f_2(t, y, z, \theta_r)\geq f_2(t, y,
z, \theta_r^\prime)$, if $\theta_r\geq \theta_r^\prime,$ $\theta,
\theta^\prime\in L_\mathcal{F}^2(t, T+K; \mathbb{R}), r\in [t,
T+K].$ If $\xi_s^{(1)}\geq \xi_s^{(2)}, s\in [T, T+K]$ and $f_1(t,
y, z, \theta_r)\geq f_2(t, y, z, \theta_r), t\in[0, T], y\in
\mathbb{R}, z\in \mathbb{R}^d, \theta\in L_\mathcal{F}^2(t, T+K;
\mathbb{R}), r\in [t, T+K],$ then $Y_t^{(1)}\geq Y_t^{(2)},\
{\rm{a.e.}},\ {\rm{a.s.}}$ }
\end{theorem}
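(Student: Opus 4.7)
The plan is to set $\hat Y_t := Y_t^{(1)} - Y_t^{(2)}$, $\hat Z_t := Z_t^{(1)} - Z_t^{(2)}$ and show that $E[((\hat Y_t)^-)^2]=0$ for every $t\in[0,T]$, which gives $Y_t^{(1)}\geq Y_t^{(2)}$ a.s. Applying the generalized It\^o formula to the $C^1$ convex function $\phi(y)=((y)^-)^2$ (whose derivative $-2y^-$ is Lipschitz), and exploiting the boundary condition $\xi^{(1)}\geq \xi^{(2)}$ to kill the endpoint term, will yield
\begin{equation*}
((\hat Y_t)^-)^2 + \int_t^T \mathbf{1}_{\{\hat Y_s<0\}}|\hat Z_s|^2\,ds = -2\int_t^T (\hat Y_s)^- \Delta_s\,ds + 2\int_t^T (\hat Y_s)^- \hat Z_s\, dB_s,
\end{equation*}
where $\Delta_s := f_1(s, Y_s^{(1)}, Z_s^{(1)}, Y_{s+\delta(s)}^{(1)}) - f_2(s, Y_s^{(2)}, Z_s^{(2)}, Y_{s+\delta(s)}^{(2)})$. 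The strategy is to bound $(\hat Y_s)^-\Delta_s$ from below so that, after taking expectation (which kills the stochastic integral), a Gronwall-type inequality for $E[((\hat Y_t)^-)^2]$ emerges.

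Next I would split $\Delta_s$ into three pieces $A_s + B_s + C_s$ by successively inserting $f_2(s, Y_s^{(1)}, Z_s^{(1)}, Y_{s+\delta(s)}^{(1)})$ and $f_2(s, Y_s^{(2)}, Z_s^{(2)}, Y_{s+\delta(s)}^{(1)})$. The first piece $A_s=(f_1-f_2)(s, Y_s^{(1)}, Z_s^{(1)}, Y_{s+\delta(s)}^{(1)})$ is non-negative by hypothesis, so it contributes non-positively to $-2(\hat Y_s)^-\Delta_s$ and can be dropped from the upper bound. The second piece $B_s$ is the difference of $f_2$ at $(Y^{(1)},Z^{(1)})$ versus $(Y^{(2)},Z^{(2)})$ with a common anticipated argument, hence bounded in absolute value by $L_f(|\hat Y_s|+|\hat Z_s|)$ via (H1); Young's inequality then yields $2(\hat Y_s)^-|B_s|\leq \varepsilon|\hat Z_s|^2\mathbf{1}_{\{\hat Y_s<0\}}+C_\varepsilon ((\hat Y_s)^-)^2$, the first summand of which is absorbable into the left-hand side for $\varepsilon$ small.

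The main obstacle is the third piece $C_s = f_2(s, Y_s^{(2)}, Z_s^{(2)}, Y_{s+\delta(s)}^{(1)}) - f_2(s, Y_s^{(2)}, Z_s^{(2)}, Y_{s+\delta(s)}^{(2)})$, which has no definite sign a priori; a naive two-sided Lipschitz bound is useless because it produces $((\hat Y_{s+\delta(s)})^-)^{\!?}$ with the wrong sign. Here the monotonicity of $f_2$ in its anticipated argument is indispensable: since $Y_{s+\delta(s)}^{(1)}\geq Y_{s+\delta(s)}^{(1)}\wedge Y_{s+\delta(s)}^{(2)}$, monotonicity gives $C_s\geq f_2(s, Y_s^{(2)}, Z_s^{(2)}, Y_{s+\delta(s)}^{(1)}\wedge Y_{s+\delta(s)}^{(2)}) - f_2(s, Y_s^{(2)}, Z_s^{(2)}, Y_{s+\delta(s)}^{(2)})$, and the Lipschitz hypothesis (H1) applied to this downward step yields the one-sided estimate $C_s\geq -L_f E^{\mathcal{F}_s}[(\hat Y_{s+\delta(s)})^-]$. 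Plugging this into the It\^o identity, applying Jensen to discard the conditional expectation inside an expectation, and absorbing the $|\hat Z|^2$ contribution, I obtain
\begin{equation*}
E[((\hat Y_t)^-)^2] \leq C\int_t^T E[((\hat Y_s)^-)^2]\,ds + C\int_t^T E[((\hat Y_{s+\delta(s)})^-)^2]\,ds.
\end{equation*}
Because $\hat Y_r\geq 0$ on $[T,T+K]$ by the terminal-condition hypothesis, assumption (a2) controls the second integral by $CM\int_t^T E[((\hat Y_r)^-)^2]\,dr$, and Gronwall's lemma forces $E[((\hat Y_t)^-)^2]=0$, completing the argument.
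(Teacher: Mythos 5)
This statement is Theorem 2.3 of the paper, which is only \emph{recalled} from Peng--Yang [10]; the paper itself contains no proof of it, so there is nothing internal to compare against. Judged on its own terms, your argument is correct and complete in all essentials: the It\^o identity for $\phi(y)=(y^-)^2$ is right (the terminal term vanishes because $\xi^{(1)}\geq\xi^{(2)}$), the three-term splitting of $\Delta_s$ is the natural one, and you correctly identify the crux, namely that the anticipated difference $C_s$ admits only a \emph{one-sided} bound and that this is exactly where the monotonicity of $f_2(t,y,z,\cdot)$ enters: replacing $Y^{(1)}_{s+\delta(s)}$ by $Y^{(1)}_{s+\delta(s)}\wedge Y^{(2)}_{s+\delta(s)}$ and then applying (H1) to the downward step gives $C_s\geq -L_f E^{\mathcal{F}_s}[(\hat Y_{s+\delta(s)})^-]$, after which conditional Jensen and the tower property produce the term $E[((\hat Y_{s+\delta(s)})^-)^2]$. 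Your use of (a2) is also the right one, and it is important (as you note) that $(\hat Y_r)^-\equiv 0$ on $[T,T+K]$ so that the $M\int_t^{T+K}$ bound collapses to $M\int_t^{T}$, closing the Gronwall loop. Two small points a referee would want spelled out: (i) the expectation of the stochastic integral $\int \hat Y_s^-\hat Z_s\,dB_s$ should be justified either by a localization/stopping argument or by the BDG estimate showing the local martingale is of class (D), since $\hat Y\in S^2_{\mathcal F}$ and $\hat Z\in L^2_{\mathcal F}$ do not immediately give square-integrability of the integrand; (ii) the conclusion $E[((\hat Y_t)^-)^2]=0$ for each $t$ plus path continuity actually yields the stronger statement ``for all $t\in[0,T]$, a.s.'', which is consistent with (and slightly better than) the ``a.e., a.s.'' in the statement. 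Neither point is a gap in the idea.
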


\vspace{1mm}

At the end of this subsection, for $f(\omega, s, y, z, \theta):
\Omega \times [0, T]\times \mathbb{R}^m\times \mathbb{R}^{m\times
d}\times S_\mathcal{F}^2(s, T+K; \mathbb{R}^m)\rightarrow L^2
(\mathcal{F}_s; \mathbb{R}^m)$ particularly, let us introduce three
more hypotheses:

${\bf{(H3)}}$ for each $(y, z, \theta)\in \mathbb{R}^m\times
\mathbb{R}^{m\times d} \times S_\mathcal{F}^2(\cdot, T+K;
\mathbb{R}^m)$, $f(\omega, \cdot, y, z,
\theta_{\cdot+\delta(\cdot)})$ is continuous, a.s.;

${\bf{(H4)}}$ $\sup_{0\leq s\leq T} |f(s, 0, 0, 0)|\in
L^2(\mathcal{F}_T; \mathbb{R});$

${\bf{(H4^\prime)}}$ $\sup_{0\leq s\leq T} |f(s, 0, 0,
\theta_{s+\delta(s)})|\in L^2(\mathcal{F}_T; \mathbb{R}),$ for all
$\theta \in S_{\mathcal{F}}^2(s, T+K; \mathbb{R}^m).$

\begin{remark}
$(H4')$ $\Rightarrow$ $(H4)$ $\Rightarrow$ $(H2)$;
$(H1)$ + $(H4)$ $\Rightarrow$ $(H4^\prime)$. Indeed, we have
\begin{align*}
E&\Big[\sup_{0\leq s\leq T}|f(s, 0, 0, \theta_{s+\delta(s)})|^2\Big] \\
&\leq 2E\Big[\sup_{0\leq s\leq T}|f(s, 0, 0,
\theta_{s+\delta(s)})-f(s, 0, 0, 0)|^2+\sup_{0\leq s\leq T}|f(s, 0,
0, 0)|^2\Big]
\\
&\leq 2L_f^2 E\Big[\sup_{0\leq s\leq
T}E^{\mathcal{F}_s}|\theta_{s+\delta(s)}|^2\Big]+2E \Big[\sup_{0\leq
s\leq
T}|f(s, 0, 0, 0)|^2\Big] \\
&\leq 2L_f^2 E\Big[\sup_{0\leq s\leq
T}|\theta_{s+\delta(s)}|^2\Big]+2E \Big[\sup_{0\leq s\leq
T}|f(s, 0, 0, 0)|^2\Big]\\
&< + \infty.
\end{align*}
\end{remark}

\section{Comparison theorem for anticipated BSDEs}

Consider the following multidimensional ABSDE:
\begin{equation}\label{equation:conparison new}
\left\{
\begin{tabular}{llll}
$-dY_t^{(j)}$ &\hspace{-4mm}=&\hspace{-4mm} $f_j(t, Y_t^{(j)},
Z_t^{(j)}, Y_{t+\delta^{(j)}(t)}^{(j)})dt-Z_t^{(j)}dB_t, $ & $
t\in[0, T];$\\
$Y_t^{(j)}$ &\hspace{-7.5mm}=&\hspace{-7mm} $\xi_t^{(j)}, $ & $t\in
[T, T+K],$
\end{tabular}\right.
\end{equation}
where $j=1, 2$, $f_j$ satisfies (H1), (H3) and (H4), $\xi^{(j)} \in
S_\mathcal{F}^2(T, T+K; \mathbb{R}^m)$, $\delta^{(j)}$ satisfies
(a1) and (a2). Then by Theorem 2.2, (\ref{equation:conparison new})
has a unique solution.

\begin{proposition}
{\it Putting $t_0=T$, we define by iteration
\begin{equation*}
t_i:=\min\{t \in [0, T]: \min\{s+ \delta^{(1)}(s),\
s+\delta^{(2)}(s)\}\geq t_{i-1}, {\rm{\ for\ all}}\ s\in [t, T]\},\
\ \ i\geq 1.
\end{equation*}
Set $N:=\max\{i: t_{i-1}>0\}$. Then $N$ is finite, $t_N=0$ and
\begin{equation*}
[0, T]=[0, t_{N-1}]\cup [t_{N-1}, t_{N-2}]\cup \cdots \cup [t_2,
t_1]\cup [t_1, T].
\end{equation*}}
\end{proposition}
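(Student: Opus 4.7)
The plan is to exploit the strict positivity of the two delay functions on the compact interval $[0,T]$ to obtain a uniform positive lower bound, and then argue that each iteration of the scheme drops $t_i$ by at least that amount until it reaches $0$. Set $\phi(s):=\min\{s+\delta^{(1)}(s),\, s+\delta^{(2)}(s)\}$. Since $\delta^{(1)},\delta^{(2)}$ are continuous from the compact $[0,T]$ into $\mathbb{R}^+\setminus\{0\}$, the constant $\delta_\ast:=\min_{s\in[0,T]}\min_{j=1,2}\delta^{(j)}(s)$ is strictly positive, hence $\phi(s)\geq s+\delta_\ast$ for every $s\in[0,T]$.

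First I would check that $t_i$ is well-defined whenever $t_{i-1}\in[0,T]$. The set $A_i:=\{t\in[0,T]:\phi(s)\geq t_{i-1}\text{ for all }s\in[t,T]\}$ contains $T$, since $\phi(T)\geq T+\delta_\ast>T\geq t_{i-1}$, and it is an ``upper'' set in $[0,T]$ (if $t\in A_i$ and $t\leq t'\leq T$, then $[t',T]\subset[t,T]$ forces $t'\in A_i$). Moreover, continuity of $\phi$ on the compact $[0,T]$ makes $t\mapsto\min_{s\in[t,T]}\phi(s)$ continuous, so $A_i$, as the preimage of $[t_{i-1},+\infty)$ under this map, is closed. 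Therefore $t_i=\min A_i$ exists in $[0,T]$.

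The quantitative key is the estimate $t_i\leq \max\{0,\,t_{i-1}-\delta_\ast\}$: for any $t\in[0,T]$ with $t\geq t_{i-1}-\delta_\ast$ and any $s\in[t,T]$, we have $\phi(s)\geq s+\delta_\ast\geq t+\delta_\ast\geq t_{i-1}$, placing $t$ in $A_i$. In particular $t_i<t_{i-1}$ strictly whenever $t_{i-1}>0$. Iterating from $t_0=T$ gives $t_i\leq\max\{0,\,T-i\delta_\ast\}$, so $t_i=0$ for every $i\geq\lceil T/\delta_\ast\rceil$; hence the set $\{i\geq 1:t_{i-1}>0\}$ is finite and admits a maximum $N$. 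By the definition of $N$, $t_{N-1}>0$, and $t_N=0$ (otherwise $N+1$ would also lie in this set, contradicting maximality). Since then $T=t_0>t_1>\cdots>t_{N-1}>t_N=0$, the intervals $[t_i,t_{i-1}]$ for $i=1,\ldots,N$ cover $[0,T]$ with only endpoint overlaps, which is exactly the claimed decomposition. The only mildly delicate point is the continuity of $t\mapsto\min_{s\in[t,T]}\phi(s)$, but this follows at once from the uniform continuity of $\phi$ on the compact $[0,T]$, so I do not expect any real obstacle.
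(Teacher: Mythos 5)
Your proof is correct, but it takes a genuinely different route from the paper's. The paper argues by contradiction: assuming infinitely many $t_i>0$, it observes that the defining minimum is tight, $\min\{t_i+\delta^{(1)}(t_i),\,t_i+\delta^{(2)}(t_i)\}=t_{i-1}$, so the $t_i$ form a strictly decreasing bounded sequence; passing to the limit $\bar t$ and using continuity forces $\delta^{(1)}(\bar t)=0$ or $\delta^{(2)}(\bar t)=0$, a contradiction, and then a separate continuity argument handles $t_N=0$. You instead extract the uniform bound $\delta_\ast=\min_{[0,T]}\min_j\delta^{(j)}>0$ by compactness at the outset and derive the quantitative decrement $t_i\leq\max\{0,\,t_{i-1}-\delta_\ast\}$, which yields everything at once: strict decrease while positive, the explicit bound $N\leq\lceil T/\delta_\ast\rceil$, and $t_i=0$ for all large $i$ without any limiting argument. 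Your version also checks a point the paper leaves implicit, namely that the minimum defining $t_i$ is actually attained (via closedness of the admissible set $A_i$ and the continuity of $t\mapsto\min_{s\in[t,T]}\phi(s)$). What the paper's argument buys is brevity; what yours buys is an effective bound on $N$ and a cleaner justification of well-definedness. Both rest on the same underlying fact that a positive continuous function on a compact interval is bounded away from zero, used at a single limit point in the paper and globally in your proof.
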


\vspace{-5mm}

\begin{proof} Let us first prove that $N$ is finite. For this
purpose, we apply the method of reduction to absurdity. Suppose that
$N$ is infinite. From the definition of $\{t_i\}_{i=1}^{+ \infty}$,
we know
\begin{equation}\label{equation:series t}
\min\{t_i+ \delta^{(1)}(t_i),\ t_i+ \delta^{(2)}(t_i)\}=t_{i-1},\
i=1, 2,\ldots.
\end{equation}
Since $\delta^{(j)}(\cdot)$ $(j=1, 2)$ are continuous and positive,
thus obviously we have $$t_i<t_{i-1},\ \ i=1, 2,\ldots.$$ Therefore
$\{t_i\}_{i=1}^{+ \infty}$ converges as a strictly monotone and
bounded sequence. Denote its limit by $\bar{t}.$ Letting
$i\rightarrow + \infty$ on both sides of (\ref{equation:series t}),
we get
$$\min\{\bar{t}+\delta^{(1)}(\bar{t}),\ \bar{t}+\delta^{(2)}(\bar{t})\}=\bar{t}.$$ Hence
$\delta^{(1)}(\bar{t})=0$ or $\delta^{(2)}(\bar{t})=0$, which is
just a contradiction since both $\delta^{(1)}$ and $\delta^{(2)}$
are positive. Consequently, $N$ is finite.

Next we will show that $t_N=0.$ In fact, the following holds
obviously:
$$\min\{t_N+\delta^{(1)}(t_N),\ t_N+\delta^{(2)}(t_N)\}> t_N,$$ which implies
$t_N = 0,$ or else  we can find a $\tilde{t}\in [0, t_N)$ due to the
continuity of $\delta^{(j)}(\cdot)$ $(j=1, 2)$ such that
$$\min\{s+\delta^{(1)}(s),\ s+\delta^{(2)}(s)\}\geq t_N,\quad {\rm{\ for\ all}}\ s\in [\tilde{t}, T],$$ from which we know that $\tilde{t}$
is an element of the sequence as well.
\end{proof}

\begin{proposition}
{\it For $j=1, 2$, suppose that $(Y^{(j)},
Z^{(j)})$ is the unique solution to the ABSDE
{\rm(\ref{equation:conparison new})}. Then for fixed $i\in {\{1,
2,\ldots, N\}},$ $\tau \in [t_i, t_{i-1}],$ over time interval
$[t_i, \tau]$, ABSDE {\rm(\ref{equation:conparison new})} is
equivalent to the following ABSDE{\,\rm:}
\begin{equation}\label{equation: in prop 2}
\left\{
\begin{array}{llll}
-d\bar{Y}_t^{(j)} &\hspace{-2.5mm}= f_j(t, \bar{Y}_t^{(j)},
\bar{Z}_t^{(j)},
\bar{Y}_{t+\delta^{(j)}(t)}^{(j)})dt-\bar{Z}_t^{(j)}dB_t,  &
t\in[t_i, \tau];\\
\bar{Y}_t^{(j)} &\hspace{-7.4mm}= Y_t^{(j)},  & t\in [\tau, T+K],
\end{array}\right.
\end{equation}
which is also equivalent to the following BSDE with terminal
condition $Y_\tau^{(j)}\!:$
\begin{equation}\label{equation: in prop 3}
\tilde{Y}_t^{(j)}=Y_\tau^{(j)}+\int_t^\tau f_j(s, \tilde{Y}_s^{(j)},
\tilde{Z}_s^{(j)}, Y_{s+\delta^{(j)}(s)}^{(j)})ds-\int_t^\tau
\tilde{Z}_s^{(j)}dB_s.
\end{equation}
That is to say,
$$
Y_t^{(j)}=\bar{Y}_t^{(j)}=\tilde{Y}_t^{(j)},\
Z_t^{(j)}=\bar{Z}_t^{(j)}=\tilde{Z}_t^{(j)},\quad t\in [t_i,
\tau],\quad j=1,2.$$}
\end{proposition}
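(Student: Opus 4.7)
The plan is to extract a single geometric fact from Proposition 3.1's construction and then invoke uniqueness twice. By the definition of $t_i$, for every $s \in [t_i, T]$ and $j = 1, 2$ we have $s + \delta^{(j)}(s) \geq t_{i-1}$; since $\tau \in [t_i, t_{i-1}]$, this gives in particular $s + \delta^{(j)}(s) \geq t_{i-1} \geq \tau$ for all $s \in [t_i, \tau]$ and $j = 1, 2$. Thus on $[t_i, \tau]$ every anticipated index falls inside the ``frozen'' interval $[\tau, T+K]$, on which the terminal data of both (\ref{equation: in prop 2}) and (\ref{equation: in prop 3}) coincide with $Y^{(j)}$. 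The whole argument rides on this observation.

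First I would establish the equivalence between (\ref{equation:conparison new}) and (\ref{equation: in prop 2}). The restriction of the original solution $(Y^{(j)}, Z^{(j)})$ to $[t_i, T+K]$ trivially satisfies both the dynamics on $[t_i, \tau]$ and the terminal condition on $[\tau, T+K]$ of (\ref{equation: in prop 2}), so it is a solution of the latter. Theorem 2.2, applied to (\ref{equation: in prop 2}) with horizon $\tau$ and post-horizon data on $[\tau, T+K]$ — the hypotheses (H1), (H3), (H4), (a1), (a2) carry over unchanged to this shorter time window — then forces uniqueness, giving $\bar{Y}^{(j)} = Y^{(j)}$ and $\bar{Z}^{(j)} = Z^{(j)}$ on $[t_i, \tau]$.

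Next I would pass from (\ref{equation: in prop 2}) to the BSDE (\ref{equation: in prop 3}). Using the geometric observation above together with the terminal condition $\bar{Y}^{(j)}_t = Y^{(j)}_t$ on $[\tau, T+K]$, I rewrite the anticipated term on $[t_i, \tau]$ as $\bar{Y}^{(j)}_{s+\delta^{(j)}(s)} = Y^{(j)}_{s+\delta^{(j)}(s)}$, which is now a fixed, process-valued input rather than an unknown. Consequently (\ref{equation: in prop 2}) restricted to $[t_i, \tau]$ coincides with the standard BSDE (\ref{equation: in prop 3}) with effective generator $\tilde{f}_j(s, y, z) := f_j(s, y, z, Y^{(j)}_{s+\delta^{(j)}(s)})$ and terminal value $Y^{(j)}_\tau = \bar{Y}^{(j)}_\tau$; Pardoux-Peng uniqueness then gives $\tilde{Y}^{(j)} = \bar{Y}^{(j)}$ and $\tilde{Z}^{(j)} = \bar{Z}^{(j)}$ on $[t_i, \tau]$, closing the chain $Y^{(j)} = \bar{Y}^{(j)} = \tilde{Y}^{(j)}$.

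The only substantive technical point, and the main obstacle I anticipate, is verifying that the effective generator $\tilde{f}_j$ meets (A1)--(A3) so that Pardoux-Peng actually applies. The Lipschitz bound (A2) is inherited directly from (H1); progressive measurability and path-continuity (A1) follow from (H3) together with $Y^{(j)} \in S^2_{\mathcal{F}}(0, T+K; \mathbb{R}^m)$; the square-integrability (A3) is obtained by a splitting as in Remark 2.3, using (H4) for $f_j(s, 0, 0, 0)$, the Lipschitz bound (H1) to control the difference, a Doob maximal inequality to absorb the conditional expectation in (H1), and finally the $S^2$-bound on $Y^{(j)}$ to control $\sup_{s \in [t_i, \tau]} |Y^{(j)}_{s+\delta^{(j)}(s)}|$ in $L^2$. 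This step is routine, but it is where all the standing hypotheses come together.
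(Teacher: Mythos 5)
Your argument is correct and is essentially the paper's own proof, just written out in full: the paper likewise reduces everything to the observation that $s+\delta^{(j)}(s)\geq t_{i-1}\geq\tau$ on $[t_i,\tau]$, the fact that the frozen generator $f_j(s,y,z,Y^{(j)}_{s+\delta^{(j)}(s)})$ satisfies (A1)--(A3), and the remark that the restriction of $(Y^{(j)},Z^{(j)})$ solves both (\ref{equation: in prop 2}) and (\ref{equation: in prop 3}), after which uniqueness closes the argument. Your more detailed verification of (A1)--(A3) (via (H1), (H3), (H4) and the $S^2$-bound on $Y^{(j)}$, as in Remark 2.3) is exactly what the paper's item (ii) tacitly asserts.
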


\begin{proof} The conclusion immediately follows from

(i) for each $s\in [t_i, \tau]$, $s+\delta^{(j)}(s)\geq t_{i-1}$;

(ii) write $f_j^Y(s, y, z)=f_j(s, y, z,
Y_{s+\delta^{(j)}(s)}^{(j)})$, then $f_j^Y$ satisfies (A1)--(A3);

(iii) $(Y_t^{(j)}, Z_t^{(j)})_{t\in [t_i, \tau]}$ satisfies both
ABSDE (\ref{equation: in prop 2}) and BSDE (\ref{equation: in prop
3}).
\end{proof}

\subsection{Comparison theorem in $\mathbb{R}^m$}

Next we will study the following problem: under which condition the
comparison theorem for multidimensional ABSDEs holds?

\begin{lemma}
{\it For fixed $i\in {\{1, 2,\ldots, N\}},$ $s\in
(t_i, t_{i-1}),$ the following are equivalent:

{\rm(i)} for all $\tau\in [t_i, s],$  $\xi^{(j)}\in
S_\mathcal{F}^2(\tau, T+K; \mathbb{R}^m)$ $(j=1, 2)$ such that
$\xi^{(1)}\geq \xi^{(2)}$ and $(\xi_t^{(j)})_{t\in [t_{i-1}, T+K]}$
$(j=1, 2)$ are fixed, the unique solutions $(Y^{(j)}, Z^{(j)})\in
S_\mathcal{F}^2(t_i, T+K; \mathbb{R}^m)\times L_\mathcal{F}^2(t_i,
\tau; \mathbb{R}^{m\times d})$ $(j=1, 2)$ to the following ABSDE
over time interval $[t_i, T+K]:$
\begin{equation}\label{equation:conparison new tau in lemma}
\left\{
\begin{array}{llll}
-dY_t^{(j)}&\hspace{-3mm}=&\hspace{-3mm} f_j(t, Y_t^{(j)},
Z_t^{(j)}, Y_{t+\delta^{(j)}(t)}^{(j)})dt-Z_t^{(j)}dB_t, &
t\in[t_i, \tau];\\
Y_t^{(j)} &\hspace{-7.5mm}=&\hspace{-6.5mm} \xi_t^{(j)},  & t\in
[\tau, T+K],
\end{array}\right.
\end{equation}
satisfy \begin{align*} Y_t^{(1)}\geq Y_t^{(2)}, {\rm{\ for\ all}}\
t\in[t_i, \tau],\ {\rm{a.s.}};\end{align*}

{\rm(ii)} for all $t\in [t_i, s]$, $(y, z), (y^\prime, z^\prime)\in
\mathbb{R}^m\times \mathbb{R}^{m\times d},$
\begin{equation*}
-4\langle y^-, f_1(t, y^++y^\prime, z,
\xi_{t+\delta^{(1)}(t)}^{(1)})-f_2(t, y^\prime, z^\prime,
\xi_{t+\delta^{(2)}(t)}^{(2)})\rangle \leq 2\sum_{k=1}^m
\mathbf{1}_{\{y_k<0\}}|z_k-z_k^\prime|^2+C|y^-|^2,\ \ {\rm{a.s.}},
\end{equation*}
where $C>0$ is a constant. }
\end{lemma}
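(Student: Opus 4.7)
The plan is to reduce the ABSDE on $[t_i,\tau]$ to a standard BSDE in the spirit of Proposition 3.2, and then apply the multidimensional BSDE comparison result (Theorem 2.1 of Hu--Peng) to the resulting BSDE. The key observation is that by the definition of $t_i$, for every $t\in[t_i,\tau]\subseteq[t_i,t_{i-1}]$ we have $t+\delta^{(j)}(t)\geq t_{i-1}\geq\tau$, so the anticipated term $Y^{(j)}_{t+\delta^{(j)}(t)}$ coincides with the given datum $\xi^{(j)}_{t+\delta^{(j)}(t)}$. Hence, defining
\begin{equation*}
\bar f_j(\omega,s,y,z):=f_j\bigl(\omega,s,y,z,\xi^{(j)}_{s+\delta^{(j)}(s)}(\omega)\bigr),\quad s\in[t_i,s],\ j=1,2,
\end{equation*}
the ABSDE \eqref{equation:conparison new tau in lemma} on $[t_i,\tau]$ is equivalent (by Proposition 3.2 applied with this choice) to the BSDE with generator $\bar f_j$ and terminal condition $\xi^{(j)}_\tau$ at time $\tau$.

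Next I would verify that $\bar f_j$ fits into the framework of Theorem 2.1: (A1) is immediate from (H3) with $\theta=\xi^{(j)}$; (A2) follows from (H1) by taking $\theta=\theta'=\xi^{(j)}$ so the conditional expectation term drops out; and (A3) is precisely $(H4')$ applied to $\theta=\xi^{(j)}$, which by Remark 2.1 is a consequence of (H1)+(H4). Once this is in place, Theorem 2.1 (trivially rephrased on the subinterval $[t_i,s]$ in place of $[0,T]$, by time translation) yields the equivalence between the BSDE comparison property on $[t_i,\tau]$ for all $\tau\in[t_i,s]$ and the pointwise inequality
\begin{equation*}
-4\langle y^-,\bar f_1(t,y^++y',z)-\bar f_2(t,y',z')\rangle\le 2\sum_{k=1}^m\mathbf{1}_{\{y_k<0\}}|z_k-z'_k|^2+C|y^-|^2,\quad t\in[t_i,s],
\end{equation*}
which, upon substituting the definition of $\bar f_j$, is exactly condition (ii) of the lemma.

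The remaining point is to match the quantifiers on terminal conditions in the two (i)'s. Since $\xi^{(j)}$ is frozen on $[t_{i-1},T+K]$, the generator $\bar f_j$ is a fixed object, and varying $\xi^{(j)}_\tau$ amounts to varying $\xi^{(j)}$ on $[\tau,t_{i-1})$ only. In the direction (i)$\Rightarrow$(ii), given any $\bar\xi^j\in L^2(\mathcal F_\tau;\mathbb{R}^m)$ with $\bar\xi^1\ge\bar\xi^2$, I would produce an admissible ABSDE terminal datum by constant extension, setting $\xi^{(j)}_t:=\bar\xi^j$ for $t\in[\tau,t_{i-1})$ and keeping the fixed values on $[t_{i-1},T+K]$; this gives a process in $S^2_{\mathcal F}(\tau,T+K;\mathbb{R}^m)$ with $\xi^{(1)}\ge\xi^{(2)}$ on all of $[\tau,T+K]$, and the BSDE built from it has terminal value $\bar\xi^j$ at time $\tau$. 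The converse direction (ii)$\Rightarrow$(i) is immediate, since any admissible $\xi^{(j)}$ in (i) of the lemma produces a valid $L^2$ terminal condition $\xi^{(j)}_\tau$ for Theorem 2.1.

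The main obstacle I anticipate is precisely this last bookkeeping: one must check carefully that the extension just described preserves progressive measurability and the $S^2$ norm (which it does, since $\bar\xi^j$ is $\mathcal F_\tau$-measurable and thus $\mathcal F_t$-measurable for $t\ge\tau$), and that the equivalence in Theorem 2.1 indeed survives the translation from $[0,T]$ to the subinterval $[t_i,s]$. Both are standard but need to be spelled out so that the quantifier ``for all $\tau\in[t_i,s]$'' on the ABSDE side lines up exactly with Hu--Peng's ``for all $\tau\in[t_i,s]$'' on the BSDE side.
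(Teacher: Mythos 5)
Your proposal is correct and follows essentially the same route as the paper: reduce the ABSDE on $[t_i,\tau]$ to a BSDE with the frozen generator $f_j^{\xi}(s,y,z)=f_j(s,y,z,\xi^{(j)}_{s+\delta^{(j)}(s)})$ via Proposition 3.2, check (A1)--(A3), and invoke Theorem 2.1; your extra care about extending a terminal value $\bar\xi^{j}\in L^{2}(\mathcal F_{\tau};\mathbb{R}^{m})$ to an admissible process on $[\tau,t_{i-1})$ is exactly the point the paper records in Remark 3.1 (where a linear interpolation is used instead of your constant extension, but either works since $S^{2}_{\mathcal F}$ here does not require path continuity).
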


\begin{proof} According to Proposition 3.2, we can equivalently
consider the following BSDE over time interval $[t_i, \tau]$ instead
of (\ref{equation:conparison new tau in lemma}):
\begin{equation}\label{equation:BSDE in lemma}
\tilde{Y}_t^{(j)}=\xi_\tau^{(j)}+\int_t^\tau f_j(r,
\tilde{Y}_r^{(j)}, \tilde{Z}_r^{(j)},
\xi_{r+\delta^{(j)}(r)}^{(j)})dr-\int_t^\tau \tilde{Z}_r^{(j)}dB_r.
\end{equation}
Write $f_j^\xi(s, y, z)=f_j(s, y, z,
\xi_{s+\delta^{(j)}(s)}^{(j)})$, then $f_j^\xi$ satisfies
(A1)--(A3).

On the other hand, it is obvious that (i) is equivalent to

(iii) for all $\tau\in [t_i, s],$ $\xi_\tau^{(j)}\in
L^2(\mathcal{F}_\tau; \mathbb{R}^m)$ $(j=1, 2)$ such that
$\xi_\tau^{(1)}\geq \xi_\tau^{(2)},$ the unique solutions
$(\tilde{Y}^{(j)}, \tilde{Z}^{(j)})\in S_\mathcal{F}^2(t_i, \tau;
\mathbb{R}^m)\times L_\mathcal{F}^2(t_i, \tau; \mathbb{R}^{m\times
d})$ $(j=1, 2)$ to the BSDE (\ref{equation:BSDE in lemma}) satisfy
$$
\tilde{Y}_t^{(1)}\geq \tilde{Y}_t^{(2)}, {\rm{\ for\ all}}\
t\in[t_i, \tau],\ {\rm{a.s.}}
$$

By Theorem 2.1, (iii) is equivalent to (ii).
\end{proof}

\begin{remark}
From the proof of Lemma 3.1, we can find that the result holds true
for arbitrary values of $(\xi^{(j)})_{t\in (\tau, t_{i-1})}$ such
that $\xi^{(j)}\in S_\mathcal{F}^2(\tau, T+K; \mathbb{R}^m)$. In
fact we even can choose them according to a fixed formula, for
example, all $\tau\in [t_i, s],$ for all $\xi_\tau^{(j)}\in
L^2(\mathcal{F}_\tau; \mathbb{R}^m)$ $(j=1,2)$ such that
$\xi_\tau^{(1)}\geq \xi_\tau^{(2)},$ and the fixed processes
$(\xi_t^{(j)})_{t\in [t_{i-1}, T+K]}$ $(j=1, 2)$ such that
$\xi_t^{(1)}\geq \xi_t^{(2)}$, we can construct ${\xi}^{(j)}\in
S_\mathcal{F}^2(\tau, t_{i-1}; \mathbb{R}^m)$ $(j=1, 2)$, thanks to
the strict inequality $\tau < t_{i-1}$, such that ${\xi}^{(1)}\geq
{\xi}^{(2)}$ as follows:
$$
{\xi}_t^{(j)}:=\frac{t_{i-1}-t}{t_{i-1}-\tau}\xi_\tau^{(j)}+\frac{t-\tau}{t_{i-1}-\tau}E^{\mathcal{F}_t}[\xi_{t_{i-1}}^{(j)}],\quad
t\in [\tau, t_{i-1}].
$$
\end{remark}

\begin{theorem}
{\it The following are equivalent:

{\rm(i)} for all $\tau\in [0, T],$ $\xi^{(j)}\in
S_\mathcal{F}^2(\tau, T+K; \mathbb{R}^m)$ $(j=1, 2)$ such that
$\xi^{(1)}\geq \xi^{(2)},$ the unique solutions $(Y^{(j)},
Z^{(j)})\in S_\mathcal{F}^2(0, T+K; \mathbb{R}^m)\times
L_\mathcal{F}^2(0, \tau; \mathbb{R}^{m\times d})$ $(j=1, 2)$ to the
following ABSDE:
\begin{equation}\label{equation:conparison new tau}
\left\{
\begin{array}{llll}
-dY_t^{(j)} &\hspace{-3mm}=&\hspace{-2.5mm} f_j(t, Y_t^{(j)},
Z_t^{(j)}, Y_{t+\delta^{(j)}(t)}^{(j)})dt-Z_t^{(j)}dB_t,  &
t\in[0, \tau];\\
Y_t^{(j)} &\hspace{-7mm}=&\hspace{-6.5mm} \xi_t^{(j)},  & t\in
[\tau, T+K],
\end{array}\right.
\end{equation}
satisfy
\begin{align*}
Y_t^{(1)}\geq Y_t^{(2)}, {\rm{\ for\ all}}\ t\in[0, \tau],\quad
{\rm{a.s.}};
\end{align*}

{\rm(ii)} for all $s\in [0, T]$, $(y, z), (y^\prime, z^\prime)\in
\mathbb{R}^m\times \mathbb{R}^{m\times d}$ and all $\theta^{(j)}\in
S_\mathcal{F}^2(s, T+K; \mathbb{R}^m)$ $(j=1, 2)$ such that
$\theta^{(1)} \geq \theta^{(2)},$
\begin{equation}\label{equation:BSVP in comparison thm 3}
-4\langle y^-, f_1(s, y^++y^\prime, z,
\theta_{s+\delta^{(1)}(s)}^{(1)})-f_2(s, y^\prime, z^\prime,
\theta_{s+\delta^{(2)}(s)}^{(2)})\rangle \leq 2\sum_{k=1}^m
\mathbf{1}_{\{y_k<0\}}|z_k-z_k^\prime|^2+C|y^-|^2,\ \ {\rm{a.s.}},
\end{equation}
where $C>0$ is a constant. }
\end{theorem}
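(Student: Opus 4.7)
The plan is to prove both implications by exploiting Proposition 3.1's partition of $[0,T]$ into the intervals $[t_i, t_{i-1}]$ on which, by Proposition 3.2, the ABSDE reduces to a standard BSDE and Theorem 2.1 of Hu-Peng becomes applicable.

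For $(ii) \Rightarrow (i)$, I would fix $\tau \in [0, T]$ and choose $i_0$ with $\tau \in [t_{i_0}, t_{i_0-1}]$, then establish $Y^{(1)}_t \geq Y^{(2)}_t$ on $[t_k, \tau]$ by induction on $k = i_0, i_0+1, \ldots, N$. The base case $k = i_0$ is the cleanest: every $t \in [t_{i_0}, \tau]$ satisfies $t + \delta^{(j)}(t) \geq t_{i_0-1} \geq \tau$, so the anticipated term is just $\xi^{(j)}_{t+\delta^{(j)}(t)}$, and Proposition 3.2 turns the ABSDE into a BSDE whose generator satisfies (A1)--(A3); plugging $\theta^{(j)} := \xi^{(j)}$ into (ii) produces exactly the BSVP inequality needed to invoke Theorem 2.1. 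The inductive step is the same in spirit: on $[t_{k+1}, t_k]$ the anticipated values of $Y^{(j)}$ lie in $[t_k, T+K]$ where the comparison is already known (from the previous step on $[t_k, \tau]$ together with $\xi^{(1)} \geq \xi^{(2)}$ on $[\tau, T+K]$), and extending these restrictions to some $\theta^{(j)} \in S_\mathcal{F}^2(t_{k+1}, T+K; \mathbb{R}^m)$ with $\theta^{(1)} \geq \theta^{(2)}$ lets me reapply (ii) and Theorem 2.1.

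For $(i) \Rightarrow (ii)$, I would first observe that Theorem 3.1 (i) implies Lemma 3.1 (i), because the ABSDE on $[t_i, \tau]$ and the one on $[0, \tau]$ with matching terminal data produce the same solution on $[t_i, \tau]$ by Proposition 3.2. Given $s \in [0, T]$ and arbitrary $\theta^{(j)} \in S_\mathcal{F}^2(s, T+K; \mathbb{R}^m)$ with $\theta^{(1)} \geq \theta^{(2)}$, if $s$ lies in some open sub-interval $(t_i, t_{i-1})$ I would pick $\tau \in [t_i, s]$, prescribe the fixed values of $\xi^{(j)}$ on $[t_{i-1}, T+K]$ to coincide with $\theta^{(j)}$ there (so that $\xi^{(j)}_{s+\delta^{(j)}(s)} = \theta^{(j)}_{s+\delta^{(j)}(s)}$, since $s + \delta^{(j)}(s) \geq t_{i-1}$), complete $\xi^{(j)}$ on $[\tau, t_{i-1}]$ via the interpolation of Remark 3.2, and read off the BSVP (\ref{equation:BSVP in comparison thm 3}) at $t = s$ from Lemma 3.1 (ii). The boundary points $s = t_i$ are handled by applying Lemma 3.1 with any $s' \in (t_i, t_{i-1})$, since $t = t_i$ already lies in $[t_i, s']$. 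The only case requiring extra care is $s = T$: I would take $s_n \uparrow T$, obtain the BSVP at each $s_n$ by the previous construction, and pass to the limit using hypothesis (H3), which guarantees that $s \mapsto f_j(s, y, z, \theta^{(j)}_{s+\delta^{(j)}(s)})$ is a.s.\ continuous.

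I expect the main obstacle to be bookkeeping rather than a conceptual difficulty: carefully matching the quantification in Theorem 3.1 (ii) (over all processes $\theta^{(j)}$ with $\theta^{(1)} \geq \theta^{(2)}$) against the more constrained quantification in Lemma 3.1 (ii) (over fixed $\xi^{(j)}$ on $[t_{i-1}, T+K]$), while ensuring that the chosen $\theta^{(j)}$'s sit inside the appropriate $S_\mathcal{F}^2$ spaces and that any extensions performed on the gap $[\tau, t_{i-1}]$ do not affect the BSVP, because only the values at $s + \delta^{(j)}(s) \geq t_{i-1}$ actually enter the inequality.
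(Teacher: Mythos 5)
Your proposal is correct and follows essentially the same route as the paper's own proof: the interval decomposition of Propositions 3.1--3.2, Lemma 3.1 combined with Theorem 2.1 applied interval by interval for $(ii)\Rightarrow(i)$, and for $(i)\Rightarrow(ii)$ the choice of terminal data agreeing with $\theta^{(j)}$ on $[t_{i-1}, T+K]$ followed by a continuity argument at the remaining endpoint. The only (harmless) deviations are that you handle the left endpoints $s=t_i$ by including them in $[t_i, s']$ rather than by the paper's continuity argument, and you are somewhat more explicit about the interpolation on the gap $[\tau, t_{i-1}]$ and about why the extension of $\theta^{(j)}$ below $t_{i-1}$ is irrelevant.
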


\begin{proof} (a) (i) $\Rightarrow$ (ii): without loss of
generality, we may assume that $s\in [t_i, t_{i-1}]$ $(i\in {\{1,
2,\ldots, N\}}).$ For some convenience of techniques, we first
consider the case when $s\in (t_i, t_{i-1}).$

According to Proposition 3.2, (i) implies

(iii) for all $\tau\in [t_i, s],$ the unique solutions $(Y^{(j)},
Z^{(j)})\in S_\mathcal{F}^2(t_i, T+K; \mathbb{R}^m)\times
L_\mathcal{F}^2(t_i, \tau; \mathbb{R}^{m\times d})$ $(j=1, 2)$ to
the following ABSDE over time interval $[t_i, T+K]\!:$
\begin{equation*}
\left\{
\begin{array}{llll}
-dY_t^{(j)} &\hspace{-2mm}=& \hspace{-2mm}f_j(t, Y_t^{(j)},
Z_t^{(j)}, Y_{t+\delta^{(j)}(t)}^{(j)})dt-Z_t^{(j)}dB_t,  &
t\in[t_i, \tau];\\
Y_t^{(j)} &\hspace{-7mm}=&\hspace{-7mm} {\xi}_t^{(j)},  & t\in
[\tau, T+K],
\end{array}\right.
\end{equation*}
satisfy $$Y_t^{(1)}\geq Y_t^{(2)}, {\rm{\ for\ all}}\ t\in[t_i,
\tau],\ {\rm{a.s.}}$$

In the above ABSDE, let $$(\xi_t^{(j)})_{t\in [t_{i-1},
T+K]}=(\theta_t^{(j)})_{t\in [t_{i-1}, T+K]}.$$

Then by Lemma 3.1, (iii) is equivalent to

(iv) for all $t\in [t_i, s]$, $(y, z), (y^\prime, z^\prime)\in
\mathbb{R}^m\times \mathbb{R}^{m\times d},$
\begin{equation}\label{equation:BSVP r:=s}
-4\langle y^-, f_1(t, y^++y^\prime, z,
\theta_{t+\delta^{(1)}(t)}^{(1)})-f_2(t, y^\prime, z^\prime,
\theta_{t+\delta^{(2)}(t)}^{(2)})\rangle \leq 2\sum_{k=1}^m
\mathbf{1}_{\{y_k<0\}}|z_k-z_k^\prime|^2+C|y^-|^2,\ \ {\rm{a.s.}}
\end{equation}

Setting $t=s$ in (\ref{equation:BSVP r:=s}), we can get
(\ref{equation:BSVP in comparison thm 3}) for $s\in (t_i, t_{i-1})$.
Note the continuity property of $f_j$ $(j=1, 2)$, then
(\ref{equation:BSVP in comparison thm 3}) holds for each $s\in [t_i,
t_{i-1}]$.

(b) (ii) $\Rightarrow$ (i): we only need to consider the nontrivial
case $\tau \neq 0.$ Without loss of generality, we may assume that
$\tau \in (t_i, t_{i-1}]$ $(i\in {\{1, 2,\ldots, N}\}).$ Our aim is
to show that
\begin{equation*}
Y_t^{(1)}\geq Y_t^{(2)}, {\rm{\ for\ all}}\ t\in[0, \tau],\
{\rm{a.s.}},
\end{equation*}
where $Y^{(j)}$ $(j=1, 2)$ are the unique solutions of ABSDE
(\ref{equation:conparison new tau}).

Consider the ABSDE (\ref{equation:conparison new tau}) one time
interval by one time interval.

For the first step, we consider the case when $t\in [t_i, \tau]$.
According to Proposition 3.2, we can equivalently consider the
following BSDE instead of ABSDE (\ref{equation:conparison new tau}):
\begin{equation*}
\tilde{Y}_t^{(j)}=\xi_\tau^{(j)}+\int_t^\tau f_j(s,
\tilde{Y}_s^{(j)}, \tilde{Z}_s^{(j)},
\xi_{s+\delta^{(j)}(s)}^{(j)})ds-\int_t^\tau \tilde{Z}_s^{(j)}dB_s.
\end{equation*}
Noticing that $\xi^{(j)}\in S_\mathcal{F}^2(\tau, T+K;
\mathbb{R}^m)$ $(j=1, 2)$ and $\xi^{(1)}\geq \xi^{(2)},$ from (ii),
we get

(v) for all $s\in [t_i, \tau]$, $(y, z), (y^\prime, z^\prime) \in
\mathbb{R}^m\times \mathbb{R}^{m\times d},$
\begin{equation*}
-4\langle y^-, f_1(s, y^++y^\prime, z,
\xi_{s+\delta^{(1)}(s)}^{(1)})-f_2(s, y^\prime, z^\prime,
\xi_{s+\delta^{(2)}(s)}^{(2)})\rangle \leq 2\sum_{k=1}^m
\mathbf{1}_{\{y_k<0\}}|z_k-z_k^\prime|^2+C|y^-|^2,\ \ {\rm{a.s.}}
\end{equation*}
Then thanks to Theorem 2.1, (v) implies
\begin{equation*}
\tilde{Y}_t^{(1)}\geq \tilde{Y}_t^{(2)}, {\rm{\ for\ all}}\
t\in[t_i, \tau],\ {\rm{a.s.}},
\end{equation*}
i.e.,
\begin{equation*}
Y_t^{(1)}\geq Y_t^{(2)}, {\rm{\ for\ all}}\ t\in[t_i, \tau],\
{\rm{a.s.}}
\end{equation*}
Consequently,
\begin{equation}\label{equation:II Y1 geq Y2 ti tau}
Y_t^{(1)}\geq Y_t^{(2)}, {\rm{\ for\ all}}\ t\in[t_i, T+K],\
{\rm{a.s.}}
\end{equation}

For the second step, we consider the case when $t\in [t_{i+1},
t_i]$. Similarly, according to Proposition 3.2, we can consider the
following BSDE equivalently:
\begin{equation*}
\tilde{Y}_t^{(j)}=Y_{t_i}^{(j)}+\int_t^{t_i} f_j(s,
\tilde{Y}_s^{(j)}, \tilde{Z}_s^{(j)},
Y_{s+\delta^{(j)}(s)}^{(j)})ds-\int_t^{t_i} \tilde{Z}_s^{(j)}dB_s.
\end{equation*}
Noticing (\ref{equation:II Y1 geq Y2 ti tau}), according to (ii), we
have

(vi) for all $s\in [t_{i+1}, t_i]$, $(y, z), (y^\prime, z^\prime)
\in \mathbb{R}^m\times \mathbb{R}^{m\times d},$
\begin{equation*}
-4\langle y^-, f_1(s, y^++y^\prime, z,
Y_{s+\delta^{(1)}(s)}^{(1)})-f_2(s, y^\prime, z^\prime,
Y_{s+\delta^{(2)}(s)}^{(2)})\rangle \leq 2\sum_{k=1}^m
\mathbf{1}_{\{y_k<0\}}|z_k-z_k^\prime|^2+C|y^-|^2,\ \ {\rm{a.s.}}
\end{equation*}
Applying Theorem 2.1 again, from (vi), we can finally get
$$Y_t^{(1)}\geq Y_t^{(2)}, {\rm{\ for\ all}}\ t\in[t_{i+1}, t_i],\ {\rm{a.s.}}$$

Similarly to the above steps, we can give the proofs for the other
cases when $t\in [t_{i+2}, t_{i+1}],[t_{i+3}, t_{i+2}],$
$\ldots,[t_N, t_{N-1}].$
\end{proof}

\begin{remark}
By Remark 2.1, we can deduce from the fact
\begin{equation*}
L_{f^\theta}=L_{f^{\theta^\prime}}, {\rm{\ for\ all}}\ \theta,
\theta^\prime \in S_\mathcal{F}^2(s, T+K; \mathbb{R}^m)
\end{equation*}
where $f^\theta(s, y, z)=f(s, y, z, \theta_{s+\delta(s)})$ and
$f^{\theta^\prime}(s, y, z)=f(s, y, z,
\theta_{s+\delta(s)}^\prime)$, that the constant $C$ in
(\ref{equation:BSVP in comparison thm 3}) is independent of
$\theta^{(j)}$ $(j=1, 2)$ and only depends on the Lipschitz
coefficients of $f_j$ $(j=1, 2)$.
\end{remark}

\begin{remark}
If $\delta^{(1)}=\delta^{(2)}=:\delta,$ then
(\ref{equation:BSVP in comparison thm 3}) is reduced to
\begin{equation}\label{equation:BSVP in comparison delta=}
-4\langle y^-, f_1(s, y^++y^\prime, z,
\theta_{s+\delta(s)}^{(1)})-f_2(s, y^\prime, z^\prime,
\theta_{s+\delta(s)}^{(2)})\rangle \leq 2\sum_{k=1}^m
\mathbf{1}_{\{y_k<0\}}|z_k-z_k^\prime|^2+C|y^-|^2,\ \ {\rm{a.s.}}
\end{equation}
Note that this conclusion is just with respect to the ABSDE
(\ref{equation:peng yang comparison}) in the multidimensional case.
\end{remark}

For the special case when $f_1=f_2=:f$ and
$\delta^{(1)}=\delta^{(2)}=:\delta$, we have the following result:

\begin{theorem}
{\it The following are equivalent:

{\rm(i)} for all $\tau\in [0, T],$ $\xi^{(j)}\in
S_\mathcal{F}^2(\tau, T+K; \mathbb{R}^m)$ $(j=1, 2)$ such that
$\xi^{(1)}\geq \xi^{(2)},$ the unique solutions $(Y^{(j)},
Z^{(j)})\in S_\mathcal{F}^2(0, T+K; \mathbb{R}^m)\times
L_\mathcal{F}^2(0, \tau; \mathbb{R}^{m\times d})$ $(j=1, 2)$ to the
following ABSDE$\!:$
\begin{equation*}
\left\{
\begin{array}{llll}
-dY_t^{(j)} &\hspace{-3mm}=&\hspace{-2mm} f(t, Y_t^{(j)}, Z_t^{(j)},
Y_{t+\delta(t)}^{(j)})dt-Z_t^{(j)}dB_t,  &
t\in[0, \tau];\\
Y_t^{(j)} &\hspace{-7mm}=&\hspace{-6mm} \xi_t^{(j)},  & t\in [\tau,
T+K],
\end{array}\right.
\end{equation*}
satisfy
\begin{align*}
Y_t^{(1)}\geq Y_t^{(2)}, {\rm{\ for\ all}}\ t\in[0, \tau],\
{\rm{a.s.}}
\end{align*}

{\rm(ii)} for any $k = 1, 2,\ldots, m$, for all $s\in [0, T]$, $y\in
\mathbb{R}^m$, $\theta \in S_\mathcal{F}^2(s, T+K; \mathbb{R}^m)$,
$\theta^{(j)}\in S_\mathcal{F}^2(s, T+K; \mathbb{R}^m)$ $(j=1, 2)$
such that $\theta^{(1)} \geq \theta^{(2)}$, $f_k(s, y, \cdot,
\theta)$ depends only on $z_k$, and
\begin{equation*} f_k(s, \delta^k y+y^\prime, z_k,
\theta_{s+\delta(s)}^{(1)}) \geq f_k(s, y^\prime, z_k,
\theta_{s+\delta(s)}^{(2)}), {\rm{\ for\ any}}\ \delta^k y\in
\mathbb{R}^m {\rm{\ such\ that}}\ \delta^k y\geq 0, (\delta^k
y)_k=0.
\end{equation*}}
\end{theorem}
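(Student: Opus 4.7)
The plan is to derive this theorem from Theorem 3.1. Specializing Theorem 3.1 to $\delta^{(1)} = \delta^{(2)} = \delta$ and then to $f_1 = f_2 = f$, and invoking the simplification recorded in Remark 3.3, condition (i) of Theorem 3.2 is equivalent to the BSVP-type inequality (\ref{equation:BSVP in comparison delta=}) with $f_1, f_2$ both replaced by $f$. The task therefore reduces to showing that this specialized BSVP inequality is equivalent to the coordinate-wise statement in (ii).

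For the direction (ii) $\Rightarrow$ (\ref{equation:BSVP in comparison delta=}), I would expand the inner product coordinate by coordinate, keeping only those indices $k$ with $y_k < 0$. For such a $k$, the vector $y^+$ satisfies $(y^+)_k = 0$ and $(y^+)_j \geq 0$ for $j \neq k$, so it is admissible as $\delta^k y$ in clause (b) of (ii). Using the $z_k$-only dependence (clause (a) of (ii)), I would add and subtract $f_k(s, y', z_k, \theta^{(2)}_{s+\delta(s)})$ to split the relevant difference into a nonnegative \emph{monotonicity part} (by clause (b)) and a \emph{Lipschitz-in-$z_k$ part}. Since the prefactor $-4(y^-)_k$ is $\leq 0$, the monotonicity part can be dropped, and Young's inequality $4L_f (y^-)_k |z_k - z'_k| \leq 2|z_k - z'_k|^2 + 2L_f^2 (y^-)_k^2$ absorbs the Lipschitz remainder into the right-hand side with $C = 2L_f^2$, consistent with Remarks 2.1 and 3.3.

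For the converse (\ref{equation:BSVP in comparison delta=}) $\Rightarrow$ (ii), I would test the BSVP inequality with carefully chosen data and pass to a limit. To extract clause (a), fix $k$, set $\theta^{(1)} = \theta^{(2)} = \theta$, take $y_k = -\epsilon$ and $y_j = 0$ otherwise (so $y^+ = 0$), and pick $z, z'$ with $z_k = z'_k$ but arbitrary entries off the $k$-th row. The BSVP collapses to $-4\epsilon[f_k(s, y', z, \theta) - f_k(s, y', z', \theta)] \leq C\epsilon^2$; dividing by $\epsilon$ and letting $\epsilon \to 0^+$ yields one inequality, and exchanging $z$ with $z'$ yields the reverse, forcing $f_k$ to depend on $z$ only through $z_k$. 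To extract clause (b) (now with (a) in hand), take $y_k = -\epsilon$, $y_j = (\delta^k y)_j \geq 0$ for $j \neq k$ (so $y^+ = \delta^k y$), $z_k = z'_k$, and $\theta^{(1)} \geq \theta^{(2)}$ arbitrary; the analogous divide-and-limit argument produces the inequality of clause (b) exactly.

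The main obstacle is the bookkeeping: maintaining correct signs in the coordinatewise expansion and choosing test vectors that isolate each clause of (ii) without spurious cross-terms appearing on the right-hand side. A minor subtlety is arranging the $\epsilon \to 0^+$ limits so that $y^+$, $\delta^k y$, and the $\theta^{(j)}$'s remain fixed while only the $k$-th coordinate of $y$ shrinks, producing a clean pointwise inequality in the limit.
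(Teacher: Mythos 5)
Your proposal is correct and follows essentially the same route as the paper: reduce to the BSVP inequality via Theorem 3.1 (with $\delta^{(1)}=\delta^{(2)}$ and $f_1=f_2$), extract the $z_k$-dependence by testing with $y=y_ke_k$ and $\theta^{(1)}=\theta^{(2)}$, extract the monotonicity clause by testing with $y=\delta^ky-\varepsilon e_k$, $z'=z$ and letting $\varepsilon\to0^+$, and verify the converse coordinatewise. The paper leaves the converse as ``easy to check''; your explicit split into a monotonicity part and a Lipschitz part handled by Young's inequality with $C=2L_f^2$ is exactly the intended verification.
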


\begin{proof} According to Theorem 3.1, (i) is equivalent to

{(iii)} for all $s\in [0, T]$, $(y, z), (y^\prime, z^\prime)\in
\mathbb{R}^m\times \mathbb{R}^{m\times d}$ and all $\theta^{(j)}\in
S_\mathcal{F}^2(s, T+K; \mathbb{R}^m)$ $(j=1, 2)$ such that
$\theta^{(1)} \geq \theta^{(2)},$
\begin{equation}\label{equation:BSVP in comparison thm 3 f1=f2}
-4\langle y^-, f(s, y^++y^\prime, z,
\theta_{s+\delta(s)}^{(1)})-f(s, y^\prime, z^\prime,
\theta_{s+\delta(s)}^{(2)})\rangle \leq 2\sum_{k=1}^m
\mathbf{1}_{\{y_k<0\}}|z_k-z_k^\prime|^2+C|y^-|^2,\ \ {\rm{a.s.}}
\end{equation}

On the one hand, suppose that (\ref{equation:BSVP in comparison thm
3 f1=f2}) holds. Let us pick $y_k< 0$, and $y=y_ke_k$,
$\theta^{(1)}=\theta^{(2)}=:\theta$. Then we get
\begin{equation*}
4 y_k (f_k(s, y^\prime, z, \theta_{s+\delta(s)})-f_k(s, y^\prime,
z^\prime, \theta_{s+\delta(s)})) \leq 2
|z_k-z_k^\prime|^2+C|y_k|^2,\ \ {\rm{a.s.}},
\end{equation*}
which implies that $f_k$ depends only on $z_k$. Furthermore, for
$\delta^k y\in \mathbb{R}^m$ such that $\delta^k y\geq 0$,
$(\delta^k y)_k=0$, putting in (\ref{equation:BSVP in comparison thm
3 f1=f2}) $y=\delta^k y-\varepsilon e_k$, $\varepsilon >0$,
$z^\prime=z$, dividing by $-\varepsilon$ and letting $\varepsilon
\rightarrow 0^+$, we can deduce that
\begin{equation*}
f_k(s, \delta^k y+y^\prime, z_k, \theta_{s+\delta(s)}^{(1)}) \geq
f_k(s, y^\prime, z_k, \theta_{s+\delta(s)}^{(2)}).
\end{equation*}

On the other hand, it is easy to check that if (ii) holds, then
(\ref{equation:BSVP in comparison thm 3 f1=f2}) holds.
\end{proof}

\subsection{Comparison theorem in $\mathbb{R}$}

From now on, we will mainly consider the special case when $m=1.$
The following is immediate according to Remark 2.2.

\begin{theorem}
{\it Assume that $m=1$. Then the following are
equivalent$\!:$

{\rm (i)} for all $\tau\in [0, T],$ $\xi^{(j)}\in
S_\mathcal{F}^2(\tau, T+K; \mathbb{R})$ $(j=1, 2)$ such that
$\xi^{(1)}\geq \xi^{(2)},$ the unique solutions $(Y^{(j)},
Z^{(j)})\in S_\mathcal{F}^2(0, T+K; \mathbb{R})\times
L_\mathcal{F}^2(0, \tau; \mathbb{R}^{d})$ $(j=1, 2)$ to the ABSDE
{\rm(\ref{equation:conparison new tau})} with terminal conditions
$\xi^{(j)}$ $(j=1, 2)$ satisfy
$$Y_t^{(1)}\geq Y_t^{(2)}, {\rm{\ for\ all}}\ t\in[0, \tau],\ {\rm{a.s.}};$$

{\rm(ii)} for all $s\in [0, T]$, $(y, z), (y^\prime, z^\prime)\in
\mathbb{R}\times \mathbb{R}^d$ and all $\theta^{(j)} \in
S_\mathcal{F}^2(s, T+K; \mathbb{R})$ $(j=1, 2)$ such that
$\theta^{(1)} \geq \theta^{(2)},$
\begin{equation*}
f_1(s, y, z, \theta_{s+\delta^{(1)}(s)}^{(1)}) \geq f_2(s, y, z,
\theta_{s+\delta^{(2)}(s)}^{(2)}).
\end{equation*}}
\end{theorem}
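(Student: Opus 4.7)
The plan is to invoke Theorem 3.1 to reduce the problem to the one-dimensional specialization of the BSVP-type inequality (\ref{equation:BSVP in comparison thm 3}), and then apply Remark 2.2 to collapse that inequality to a pointwise comparison of generators. This is exactly the route that the author's hint, ``immediate according to Remark 2.2,'' suggests.

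First I would apply Theorem 3.1 directly. Since $f_1,f_2$ satisfy (H1), (H3), (H4), the hypotheses of Theorem 3.1 are met in the scalar case, so (i) is equivalent to: for all $s\in[0,T]$, all $(y,z),(y',z')\in\mathbb{R}\times\mathbb{R}^d$, and all $\theta^{(j)}\in S_\mathcal{F}^2(s,T+K;\mathbb{R})$ with $\theta^{(1)}\geq\theta^{(2)}$, the inequality (\ref{equation:BSVP in comparison thm 3}) holds almost surely with $m=1$.

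Next, for each such fixed $s$ and fixed admissible pair $\theta^{(1)}\geq\theta^{(2)}$, I would introduce the auxiliary generators
\begin{equation*}
g_j(s,y,z) := f_j\bigl(s,y,z,\theta^{(j)}_{s+\delta^{(j)}(s)}\bigr),\qquad j=1,2,
\end{equation*}
which, as already observed in the proof of Lemma 3.1, satisfy (A1)--(A3). The one-dimensional specialization of (\ref{equation:BSVP in comparison thm 3}) for these data is precisely the inequality (\ref{equation:hu peng bsvp}) of Theorem 2.1 with $g_1,g_2$ in place of the BSDE generators. By Remark 2.2, in the scalar case this is equivalent to $g_1(s,y,z)\geq g_2(s,y,z)$ for all $(y,z)$, which is exactly the pointwise inequality
\begin{equation*}
f_1\bigl(s,y,z,\theta^{(1)}_{s+\delta^{(1)}(s)}\bigr)\geq f_2\bigl(s,y,z,\theta^{(2)}_{s+\delta^{(2)}(s)}\bigr),
\end{equation*}
namely condition (ii).

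Since both equivalences are preserved under the outer quantifiers over $s$ and over pairs $\theta^{(1)}\geq\theta^{(2)}$, the combined equivalence (i) $\Leftrightarrow$ (ii) follows. There is no real obstacle here; the only small point to be careful about is that the quantifier over $\theta^{(1)}\geq\theta^{(2)}$ must stand outside the application of Remark 2.2, so that the reduction from the BSVP inequality to $g_1\geq g_2$ is carried out separately for each admissible pair. The Lipschitz estimates needed to reuse the constant $C$ uniformly in $(\theta^{(1)},\theta^{(2)})$ are already supplied by Remark 3.3.
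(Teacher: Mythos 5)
Your proposal is correct and follows exactly the route the paper intends: Theorem 3.1 reduces (i) to the scalar case of (\ref{equation:BSVP in comparison thm 3}), and Remark 2.2, applied for each fixed $s$ and each admissible pair $\theta^{(1)}\geq\theta^{(2)}$ to the frozen generators $g_j(s,y,z)=f_j(s,y,z,\theta^{(j)}_{s+\delta^{(j)}(s)})$, collapses that inequality to the pointwise comparison in (ii). The paper gives no more detail than ``immediate according to Remark 2.2,'' so your write-up is simply a faithful expansion of the same argument.
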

\vspace{-4.5mm}

\begin{remark}
(\ref{equation:BSVP in comparison delta=}) is
equivalent to
\begin{equation}\label{equation:1 dim delta=}
f_1(s, y, z, \theta_{s+\delta(s)}^{(1)}) \geq f_2(s, y, z,
\theta_{s+\delta(s)}^{(2)}).
\end{equation}
\end{remark}

\begin{remark}
The generators $f_1$ and $f_2$ will satisfy
(\ref{equation:1 dim delta=}), if for all $s\in [0, T],$ $y\in
\mathbb{R},$ $z\in \mathbb{R}^d,$ $\theta\in L_\mathcal{F}^2(s, T+K;
\mathbb{R}),$ $r\in [s, T+K],$ $f_1(s, y, z, \theta_r)\geq f_2(s, y,
z, \theta_r),$ together with one of the following:

{(i)} for all $s\in [0, T],$ $y\in \mathbb{R},$ $z\in \mathbb{R}^d,$
$f_1(s, y, z, \cdot)$ is increasing, i.e., $f_1(s, y, z,
\theta_r)\geq f_1(s, y, z, \theta_r^\prime),$ if $\theta \geq
\theta^\prime,$ $\theta, \theta^\prime\in L_\mathcal{F}^2(s, T+K;
\mathbb{R}),$ $r\in [s, T+K];$

{(ii)} for all $s\in [0, T],$ $y\in \mathbb{R},$ $z\in
\mathbb{R}^d,$ $f_2(s, y, z, \cdot)$ is increasing, i.e., $f_2(s, y,
z, \theta_r)\geq f_2(s, y, z, \theta_r^\prime),$ if $\theta \geq
\theta^\prime,$ $\theta, \theta^\prime\in L_\mathcal{F}^2(s, T+K;
\mathbb{R}),$ $r\in [s, T+K].$

Note that the latter is just the case that Peng-Yang \cite{PY}
discussed (see Theorem 2.3).
\end{remark}

\begin{remark}
The generators $f_1$ and $f_2$ will satisfy
(\ref{equation:1 dim delta=}), if there exists a function
$\tilde{f}$ such that
\begin{equation*}
f_1(s, y, z, \theta_r)\geq \tilde{f}(s, y, z, \theta_r)\geq f_2(s,
y, z, \theta_r),
\end{equation*}
for all $s\in [0, T],$ $y\in \mathbb{R},$ $z\in \mathbb{R}^d,$
$\theta \in L_\mathcal{F}^2(s, T+K; \mathbb{R}), r\in [s, T+K].$
Here the function $\tilde{f}(s, y, z, \cdot)$ is increasing, for all
$s\in [0, T],$ $y\in \mathbb{R},$ $z\in \mathbb{R}^d,$ i.e.,
$\tilde{f}(s, y, z, \theta_r)\geq \tilde{f}(s, y, z,
\theta_r^\prime),$ if $\theta_r\geq \theta_r^\prime,$ $\theta,
\theta^\prime\in L_\mathcal{F}^2(s, T+K; \mathbb{R}), r\in [s,
T+K].$
\end{remark}

\begin{example}
Now suppose that we are facing with the following two ABSDEs:
\begin{equation*}
\left\{
\begin{array}{llll}
-dY_t^{(1)} &\hspace{-3mm}=
E^{\mathcal{F}_t}[Y_{t+\delta(t)}^{(1)}+2\sin
Y_{t+\delta(t)}^{(1)}+1]dt-Z_t^{(1)}dB_t,  & t\in[0, T];\\
Y_t^{(1)} &\hspace{-7mm}=\xi_t^{(1)},  & t\in[T, T+K],
\end{array}\right.
\end{equation*}
\begin{equation*}
\left\{
\begin{array}{llll}
-dY_t^{(2)} &\hspace{-3mm}=
E^{\mathcal{F}_t}[Y_{t+\delta(t)}^{(2)}+\cos
(2Y_{t+\delta(t)}^{(2)})-2]dt-Z_t^{(2)}dB_t,  &
t\in[0, T];\\
Y_t^{(2)} &\hspace{-7mm}= \xi_t^{(2)},  & t\in[T, T+K],
\end{array}\right.
\end{equation*}
where $\xi_t^{(1)}\geq \xi_t^{(2)}, t\in [T, T+K].$

As neither of the generators is increasing in the anticipated term
of $Y,$ we cannot apply Peng, Yang's comparison theorem to compare
$Y^{(1)}$ and $Y^{(2)}.$

While noting that $f_1(s, y, z,
\theta_r)=E^{\mathcal{F}_s}[\theta_r+2\sin \theta_r+1],$ $f_2(s, y,
z, \theta_r)=E^{\mathcal{F}_s}[\theta_r+\cos (2\theta_r)-2],$ we can
choose $\tilde{f}(s, y, z, \theta_r)=E^{\mathcal{F}_s}[\theta_r+\sin
\theta_r]$, due to the following facts:
\begin{equation*}
x+2\sin x+1\geq x+\sin x\geq x+\cos (2x)-2, x+\sin x \geq y +\sin y,
{\rm{\ for\ all}}\ x\geq y, x, y \in \mathbb{R}.
\end{equation*}
Then obviously, $f_1$, $\tilde{f}$ and $f_2$ satisfy the conditions
in Remark 3.6. Thus according to Theorem 3.3 (together with Remark
3.6), we get $Y_t^{(1)}\geq Y_t^{(2)}, {\rm{\ for\ all}}\ t\in [0,
T+K],\ {\rm{a.s.}}$
\end{example}

\begin{remark}
In \cite{PY}, Peng and Yang gave a counterexample
which indicates that if $f_2$ is not increasing in the anticipated
term of $Y$
then their comparison theorem (see Theorem 2.3) will not hold. In
fact the main reason is that the generators appearing in that
example do not satisfy the necessary and sufficient condition listed
in Remark 3.4.

That is to say, there is no contradiction between our result and
Peng and Yang's.
\end{remark}

\section*{Acknowledgements}

The author is grateful to Professor Shige Peng for his encouragement
and helpful discussions.



\end{document}